\documentclass[11pt,reqno]{amsart}
\UseRawInputEncoding

\usepackage{latexsym,bm, amsfonts,amsthm,mathrsfs,CJK}
\usepackage{amsmath}

\setlength{\topmargin}{0cm} \setlength{\oddsidemargin}{0cm} \setlength{\evensidemargin}{0cm} \setlength{\textwidth}{15truecm}
\setlength{\textheight}{22.8truecm}
\makeatletter
\newcommand{\rmnum}[1]{\romannumeral #1}
\newcommand{\Rmnum}[1]{\expandafter\@slowromancap\romannumeral #1@}
\makeatother
\newtheorem{thm}{Theorem}[section]
\newtheorem{lem}[thm]{Lemma} 
 \newtheorem{defn}[thm]{Definition} \newtheorem{rmk}[thm]{Remark}

\newcommand{\dif}{\mathrm{d}} \DeclareMathAlphabet{\mathsfsl}{OT1}{cmss}{m}{sl} \DeclareMathAlphabet{\mathpzc}{OT1}{pzc}{m}{it}

    \newcommand{\ee}{\mathbb{E}}
   \newcommand{\nn}{\mathbb{N}} \newcommand{\rr}{\mathbb{R}}
    
\newcommand{\vv}{\mathbb{V}}
 \def\CC{\mathcal C}   \def\FF{\mathcal F}  \def\HH{\mathcal H}

 \def\d"{^{\prime\prime}} \def\d'{^{\prime}}

\begin{document}\title[]{Limiting behaviour of moving average processes genenrated by negatively dependent random variables under sub-linear expectations}
	\thanks{Project supported by Doctoral Scientific Research Starting Foundation of Jingdezhen Ceramic University ( Nos.102/01003002031 ), Natural Science Foundation Program of Jiangxi Province 20202BABL211005, and National Natural Science Foundation of China (Nos. 61662037), Jiangxi Province Key S\&T Cooperation Project (Nos. 20212BDH80021).}
	\subjclass[2000]{60F15; 60F05} \keywords{Negatively dependent random variables; Complete convergence; Complete moment convergence; Sub-linear expectations}
	\date{} \maketitle
	
	\begin{center}
		Mingzhou Xu~\footnote{Email: mingzhouxu@whu.edu.cn} \quad Kun Cheng~\footnote{Email: chengkun0010@126.com}\quad Wangke Yu~\footnote{Email:  ywkyyy@163.com}
		\\
		School of Information Engineering, Jingdezhen Ceramic University\\
		Jingdezhen 333403, China
		
	\end{center}
	\begin{abstract}
		Let $\{Y_i,-\infty<i<\infty\}$ be a doubly infinite sequence of identically distributed, negatively dependent random variables under sub-linear expectations, $\{a_i,-\infty<i<\infty\}$ be an absolutely summable sequence of real numbers. In this article, we study complete convergence and Marcinkiewicz-Zygmund strog law of large numbers for the partial sums of moving average processes $\{X_n=\sum_{i=-\infty}^{\infty}a_{i}Y_{i+n},n\ge 1\}$ based on the sequence $\{Y_i,-\infty<i<\infty\}$ of identically distributed, negatively dependent random variables under sub-linear expectations, complementing the result of [Chen, et al., 2009. Limiting behaviour of moving average processes under $\varphi$-mixing assumption. Statist. Probab. Lett. 79, 105-111].
	\end{abstract}
	
	\section{Introduction }
	\cite{peng2007g,peng2010nonlinear,peng2019nonlinear} firstly introduced the important concepts of the sub-linear expectations space to study the uncertainty in probability. Inspired by the seminal works of \cite{peng2007g,peng2010nonlinear,peng2019nonlinear}, many scholars try to study the results under sub-linear expectations space, generalizing the corresponding ones in classic probability space.  \cite{zhang2015donsker,zhang2016exponential,zhang2016rosenthal} established Donsker's invariance principle, exponential inequalities and Rosenthal's inequality under sub-linear expectations. \cite{wu2020precise} obtained precise asymptotics for complete integral convergence under sub-linear expectations. Under sub-linear expectations, \cite{xu2022small} investigated how small the increments of $G$-Brownian motion are. For more limit theorems under sub-linear expectations, the interested readers could refer to \cite{xu2019three,xu2020law}, \cite{wu2018strong}, \cite{zhang2018marcinkiewicz}, \cite{zhong2017complete},  \cite{hu2017some}, \cite{chen2016strong}, \cite{chen2022complete}, \cite{zhang2016strong}, \cite{hu2014big}, \cite{gao2011large}, \cite{kuczmaszewska2020complete}, \cite{xu2021precise,xu2021equivalent,xu2021convergence,xu2022small,xu2022note} and references therein.
	
	In classic probability space, \cite{zhang2017further} studied the complete moment convergence of the partial sums of moving average processes under some proper assumptions,  \cite{chen2009limiting} proved complete convergence of moving average processes under $\varphi$-mixing assumption. For references on complete convergence and complete moment convergence in linear expectation space, the interested reader could refer to \cite{ko2015complete}, \cite{meng2021convergence},  \cite{hosseini2019complete}, \cite{meng2022complete} and refercences therein. Inspired by the work of \cite{chen2009limiting}, we try to study complete convergence and Marcinkiewicz-Zygmund strog law of large numbers for the partial sums of moving average processes generated by negatively dependent random variables under sub-linear expectations, which complements the corresponding results in \cite{meng2022complete}.
	
	We organize the rest of this paper as follows. We give necessary basic notions, concepts and relevant  properties, and cite necessary lemmas under sub-linear expectations in the next section. In Section 3, we give our main results, Theorems \ref{thm3.1}-\ref{thm3.2},  the proofs of which are  presented in Section 4.
	
	\section{Preliminaries}
	As in \cite{xu2021convergence}, we use similar notations as in the work by \cite{peng2010nonlinear} \cite{peng2019nonlinear}, \cite{chen2016strong}, \cite{zhang2016rosenthal}. Assume that $(\Omega,\FF)$ is a given measurable space. Suppose that $\HH$ is a subset of all random variables on $(\Omega,\FF)$ such that  $X_1,\cdots,X_n\in \HH$ implies $\varphi(X_1,\cdots,X_n)\in \HH$ for each $\varphi\in \CC_{l,Lip}(\rr^n)$, where $\CC_{l,Lip}(\rr^n)$ represents the linear space of (local lipschitz) function $\varphi$ fulfilling
	$$
	|\varphi(\mathbf{x})-\varphi(\mathbf{y})|\le C(1+|\mathbf{x}|^m+|\mathbf{y}|^m)(|\mathbf{x}-\mathbf{y}|), \forall \mathbf{x},\mathbf{y}\in \rr^n
	$$
	for some $C>0$, $m\in \nn$ depending on $\varphi$.
	\begin{defn}\label{defn1} A sub-linear expectation $\ee$ on $\HH$ is a functional $\ee:\HH\mapsto \bar{\rr}:=[-\infty,\infty]$ satisfying the following properties: for all $X,Y\in \HH$, we have
		\begin{description}
			\item[\rm (a)]  Monotonicity: If $X\ge Y$, then $\ee[X]\ge \ee[Y]$;
			\item[\rm (b)] Constant preserving: $\ee[c]=c$, $\forall c\in\rr$;
			\item[\rm (c)] Positive homogeneity: $\ee[\lambda X]=\lambda\ee[X]$, $\forall \lambda\ge 0$;
			\item[\rm (d)] Sub-additivity: $\ee[X+Y]\le \ee[X]+\ee[Y]$ whenever $\ee[X]+\ee[Y]$ is not of the form $\infty-\infty$ or $-\infty+\infty$.
		\end{description}
		
	\end{defn}
	A set function $V:\FF\mapsto[0,1]$ is named to be a capacity if
	\begin{description}
		\item[\rm (a)]$V(\emptyset)=0$, $V(\Omega)=1$;
		\item[\rm (b)]$V(A)\le V(B)$, $A\subset B$, $A,B\in \FF$.\\
		Moreover, if $V$ is continuous, then $V$ should obey
		\item[\rm (c)] $V(A_n)\uparrow V(A)$, if $A_n\uparrow A$.
		\item[\rm (d)] $V(A_n)\downarrow V(A)$, if $A_n\downarrow A$.
	\end{description}
	A capacity $V$ is called to be sub-additive if $V(A+B)\le V(A)+V(B)$, $A,B\in \FF$.

	In this article, given a sub-linear expectation space $(\Omega, \HH, \ee)$, set $\vv(A):=\inf\{\ee[\xi]:I_A\le \xi, \xi\in \HH\}$, $\forall A\in \FF$ (see (2.3) and the definitions of $\vv$ above (2.3) in \cite{zhang2016exponential}. $\vv$ is a sub-additive capacity.
	Define
	$$
	C_{\vv}(X):=\int_{0}^{\infty}\vv(X>x)\dif x +\int_{-\infty}^{0}(\vv(X>x)-1)\dif x.
	$$

	Suppose that $\mathbf{X}=(X_1,\cdots, X_m)$, $X_i\in\HH$ and $\mathbf{Y}=(Y_1,\cdots,Y_n)$, $Y_i\in \HH$  are two random vectors on  $(\Omega, \HH, \ee)$. $\mathbf{Y}$ is named to be negatively dependent to $\mathbf{X}$, if for each Borel-measurable function $\psi_1$ on $\rr^m$, $\psi_2$ on $\rr^n$, we have $\ee[\psi_1(\mathbf{X})\psi_2(\mathbf{Y})]\le\ee[\psi_1(\mathbf{X})] \ee[\psi_2(\mathbf{Y})]$ whenever $\psi_1(\mathbf{X})\ge 0$, $\ee[\psi_2(\mathbf{Y})]\ge 0 $, $\ee[\psi_1(\mathbf{X})\psi_2(\mathbf{Y})]<\infty$, $\ee[|\psi_1(\mathbf{X})|]<\infty$, $\ee[|\psi_2(\mathbf{Y})|]<\infty$, and either $\psi_1$ and $\psi_2$ are coordinatewise nondecreasing or $\psi_1$ and $\psi_2$ are coordinatewise nonincreasing (see Definition 2.3 of \cite{zhang2016exponential}, Definition 1.5 of \cite{zhang2016rosenthal}, Definition 2.5 in \cite{chen2016strong}).
	$\{X_n\}_{n=1}^{\infty}$ is called a sequence of negatively dependent random variables, if $X_{n+1}$ is negatively dependent to $(X_1,\cdots,X_n)$ for each $n\ge 1$.
	
	Assume that $\mathbf{X}_1$ and $\mathbf{X}_2$ are two $n$-dimensional random vectors defined, respectively, in sub-linear expectation spaces $(\Omega_1,\HH_1,\ee_1)$ and $(\Omega_2,\HH_2,\ee_2)$. They are named identically distributed if  for every Borel-measurable function $\psi$ such that $\psi(\mathbf{X}_1)\in \HH_1, \psi(\mathbf{X}_2)\in \HH_2$,
	$$
	\ee_1[\psi(\mathbf{X}_1)]=\ee_2[\psi(\mathbf{X}_2)], \mbox{  }
	$$
	whenever the sub-linear expectations are finite. $\{X_n\}_{n=1}^{\infty}$ is called to be identically distributed if for each $i\ge 1$, $X_i$ and $X_1$ are identically distributed.
	
	In this article we suppose that $\ee$ is countably sub-additive, i.e., $\ee(X)\le \sum_{n=1}^{\infty}\ee(X_n)$, whenever $X\le \sum_{n=1}^{\infty}X_n$, $X,X_n\in \HH$, and $X\ge 0$, $X_n\ge 0$, $n=1,2,\ldots$. Write $S_n=\sum_{i=1}^{n}X_i$, $n\ge 1$. Let $C$ represent a positive constant which may differ from place to place. $I(A)$ or $I_A$ stand for the indicator function of $A$.
	
	As discussed in \cite{zhang2016rosenthal}, by the definition of negative dependence, if $X_1, X_2,\ldots, X_n$ are negatively dependent random variables and $f_1$, $f_2, \ldots, f_n$ are all non increasing ( or non decreasing) functions, then $f_1(X_1)$, $f_2(X_2), \ldots, f_n(X_n)$ are still negatively dependent random variables.

	We cite the following inequalities under sub-linear expectations.
	\begin{lem}\label{lem01}(See Lemma 4.5 (\rmnum{3}) of \cite{zhang2016exponential}) If $\ee$ is countably sub-additive under sub-linear expectation space $(\Omega, \HH, \ee)$, then for $X\in \HH$,
		$$
		\ee|X|\le C_{\vv}\left(|X|\right).
		$$
	\end{lem}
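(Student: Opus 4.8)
Since $|X|\ge 0$, the event $\{|X|>x\}$ equals $\Omega$ for every $x<0$, so $\vv(|X|>x)=1$ there and the second integral in the definition of $C_{\vv}(|X|)$ vanishes. Hence it suffices to prove
\[
\ee|X|\le \int_{0}^{\infty}\vv\big(|X|>x\big)\,\dif x .
\]
If the right-hand side is $+\infty$ there is nothing to prove, so assume it is finite. The plan is a layer-cake discretization of $|X|$ followed by an application of countable sub-additivity of $\ee$.

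Fix $\mu>0$. Since $|X|$ is finite-valued and nonnegative, $|X|=\int_{0}^{\infty}I(|X|>x)\,\dif x$, and on each interval $[(k-1)\mu,k\mu]$ one has $I(|X|>x)\le I(|X|>(k-1)\mu)$; summing over $k\ge 1$ gives the pointwise estimate
\[
|X|\le \mu\sum_{k=1}^{\infty}I\big(|X|>(k-1)\mu\big).
\]
Next fix $\epsilon>0$ and, using the definition $\vv(A)=\inf\{\ee[\xi]:I_A\le\xi,\ \xi\in\HH\}$, choose for each $k$ a variable $\xi_k\in\HH$ with $\xi_k\ge I(|X|>(k-1)\mu)\ge 0$ and $\ee[\xi_k]\le \vv(|X|>(k-1)\mu)+\epsilon 2^{-k}$. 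Then $|X|\le \mu\sum_{k\ge1}\xi_k$ with each $\mu\xi_k\in\HH$ nonnegative, so by countable sub-additivity together with positive homogeneity of $\ee$,
\[
\ee|X|\le \sum_{k=1}^{\infty}\ee[\mu\xi_k]=\mu\sum_{k=1}^{\infty}\ee[\xi_k]\le \mu\sum_{k=1}^{\infty}\vv\big(|X|>(k-1)\mu\big)+\mu\epsilon .
\]
Letting $\epsilon\downarrow 0$ yields $\ee|X|\le \mu\sum_{k\ge1}\vv(|X|>(k-1)\mu)$.

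To finish, split off the $k=1$ term: $\mu\sum_{k\ge1}\vv(|X|>(k-1)\mu)=\mu\,\vv(|X|>0)+\mu\sum_{j\ge1}\vv(|X|>j\mu)$. Since $x\mapsto\vv(|X|>x)$ is non-increasing, $\mu\,\vv(|X|>j\mu)\le\int_{(j-1)\mu}^{j\mu}\vv(|X|>x)\,\dif x$, so the $j$-sum is at most $\int_0^\infty\vv(|X|>x)\,\dif x=C_{\vv}(|X|)$, while $\mu\,\vv(|X|>0)\le\mu$. Hence $\ee|X|\le \mu+C_{\vv}(|X|)$ for every $\mu>0$, and letting $\mu\downarrow 0$ gives the claim.

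The only genuinely delicate point is the passage from the pointwise bound on $|X|$ to the bound on $\ee|X|$: one cannot take $\ee$ of an infinite sum term by term without a hypothesis, which is precisely why countable sub-additivity is assumed, and one must also interchange the infimum defining each $\vv(|X|>(k-1)\mu)$ with the summation, which is handled cleanly by the $\epsilon 2^{-k}$ device. Everything else is the routine non-increasing-integrand Riemann-sum comparison.
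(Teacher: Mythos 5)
Your proof is correct. Note that the paper does not prove this lemma at all: it is quoted verbatim from Lemma 4.5 (iii) of the cited work of Zhang (2016), so there is no in-paper argument to compare against; your layer-cake discretization $|X|\le \mu\sum_{k\ge 1} I\{|X|>(k-1)\mu\}$, the $\epsilon 2^{-k}$ selection of dominating variables $\xi_k\in\HH$ from the infimum defining $\vv$, countable sub-additivity plus positive homogeneity, and the monotone Riemann-sum comparison with $\mu\downarrow 0$ constitute a valid self-contained derivation of exactly this standard fact. The two hypotheses you rely on are indeed available in this setting: $\xi_k\ge I_{A}\ge 0$, so the nonnegativity required by the paper's form of countable sub-additivity is automatic, and $\vv(A)\le \ee[1]=1<\infty$ guarantees the $\epsilon$-approximants exist; the only implicit assumption, that $|X|\in\HH$, follows since $|\cdot|\in \CC_{l,Lip}(\rr)$.
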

	\begin{lem}\label{lem02}(See Theorem 2.1 of \cite{zhang2016rosenthal}) Assume that $p>1$ and $\{Y_n;n\ge 1\}$ is a sequence of negatively dependent random varables under sub-linear expectation space $(\Omega,\HH,\ee)$. Then for each $n\ge 1$, there exists a positive constant $C=C(p)$ depending on $p$ such that for $p\ge2$,
		\begin{eqnarray}\label{01}
			\nonumber &&\ee\left[\max_{0\le i\le n}\left|\sum_{j=1}^{i}Y_j\right|^p\right]\\
			&&\quad\le C \left\{\sum_{i=1}^{n}\ee\left|Y_i\right|^p+\left(\sum_{i=1}^{n}\ee Y_i^2\right)^{p/2}+\left(\sum_{i=1}^{n}\left[(-\ee(-Y_k))^{-}+(\ee(Y_k))^{+}\right]\right)^p\right\}.
		\end{eqnarray}
		
	\end{lem}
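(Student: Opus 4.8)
The plan is to obtain \eqref{01} in three steps: reduce the maximal moment to a plain $p$-th moment bound, prove the plain bound for the centred summands by truncation together with an exponential inequality, and finally reinsert the centring constants. \textbf{Step 1: removing the maximum.} Put $G(m,n)=\sum_{i=m}^{n}\ee|Y_i|^{p}+\bigl(\sum_{i=m}^{n}\ee Y_i^{2}\bigr)^{p/2}+\bigl(\sum_{i=m}^{n}[(-\ee(-Y_i))^{-}+(\ee Y_i)^{+}]\bigr)^{p}$. Since $x^{q}+y^{q}\le(x+y)^{q}$ for $q\ge1$ and $x,y\ge0$, the functional $G$ is superadditive in the index block, and every block $\{Y_m,\dots,Y_n\}$ is again negatively dependent. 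Hence, once the non-maximal bound $\ee\bigl|\sum_{i=m}^{n}Y_i\bigr|^{p}\le C\,G(m,n)$ is established for all blocks, a M\'oricz-type maximal inequality --- which uses only the triangle inequality for $\|\cdot\|_{p}$ under $\ee$ (a consequence of sub-additivity and positive homogeneity), the superadditivity of $G$, and $p>1$ --- upgrades it to $\ee\bigl[\max_{0\le i\le n}|S_i|^{p}\bigr]\le C\,G(1,n)$, which is \eqref{01}.

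\textbf{Step 2: the centred $p$-th moment bound.} Replace $Y_i$ by $\widetilde Y_i=Y_i-\ee Y_i$; translation by a constant preserves negative dependence and gives $\ee\widetilde Y_i=0$, $\ee(-\widetilde Y_i)=\ee(-Y_i)+\ee Y_i\ge0$. By Lemma \ref{lem01} applied to $|\widetilde S_n|^{p}\ge0$,
\[\ee|\widetilde S_n|^{p}\le C_{\vv}\bigl(|\widetilde S_n|^{p}\bigr)=\int_{0}^{\infty}p\,t^{p-1}\,\vv\bigl(|\widetilde S_n|>t\bigr)\,\dif t.\]
For each $t>0$ split $\widetilde Y_i=\widetilde Y_i'+\widetilde Y_i''$, where $\widetilde Y_i'=(-t)\vee(\widetilde Y_i\wedge t)$ is a bounded $\CC_{l,Lip}$ transform of $\widetilde Y_i$ (so $\widetilde Y_1',\dots,\widetilde Y_n'$ are still negatively dependent) and $\widetilde Y_i''$ vanishes off $\{|\widetilde Y_i|>t\}$; then $\{|\widetilde S_n|>t\}\subseteq\bigcup_i\{|\widetilde Y_i|>t\}\cup\{|\widetilde S_n'|>t/2\}$ with $\widetilde S_n'=\sum_i\widetilde Y_i'$. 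By countable sub-additivity of $\vv$ the large-jump part contributes $\int_{0}^{\infty}p\,t^{p-1}\sum_i\vv(|\widetilde Y_i|>t)\,\dif t=\sum_i C_{\vv}(|\widetilde Y_i|^{p})$. For the bounded part one invokes an exponential inequality of Bernstein/Fuk--Nagaev type for bounded negatively dependent summands (as established in \cite{zhang2016exponential}): truncating at a small fixed fraction of $t$, it gives a sub-Gaussian estimate up to the scale $(\sum_i\ee Y_i^{2})^{1/2}$ and, beyond that scale, a tail in $t$ that can be made to decay faster than any prescribed power, with the drift of $\widetilde S_n'$ absorbed into $\sum_i(\ee(-Y_i)+\ee Y_i)$; inserting these into $\int_{0}^{\infty}p\,t^{p-1}\vv(|\widetilde S_n'|>t/2)\,\dif t$ and integrating the two regimes separately gives a bound of order $(\sum_i\ee Y_i^{2})^{p/2}+(\sum_i(\ee(-Y_i)+\ee Y_i))^{p}$. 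Altogether $\ee|\widetilde S_n|^{p}\le C\bigl[\sum_i C_{\vv}(|\widetilde Y_i|^{p})+(\sum_i\ee Y_i^{2})^{p/2}+(\sum_i(\ee(-Y_i)+\ee Y_i))^{p}\bigr]$.

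\textbf{Step 3: restoring the drift, and the main obstacle.} Since $|S_n|^{p}\le2^{p-1}\bigl(|\widetilde S_n|^{p}+|\sum_i\ee Y_i|^{p}\bigr)$, and since $\ee Y_i+\ee(-Y_i)\ge0$ (sub-additivity applied to $Y_i+(-Y_i)=0$) forces $(\ee Y_i)^{-}=(-\ee Y_i)^{+}\le(\ee(-Y_i))^{+}=(-\ee(-Y_i))^{-}$, hence $|\ee Y_i|\le(\ee Y_i)^{+}+(-\ee(-Y_i))^{-}$ and $\ee(-Y_i)+\ee Y_i\le(\ee Y_i)^{+}+(-\ee(-Y_i))^{-}$, the terms $|\sum_i\ee Y_i|^{p}$ and $(\sum_i(\ee(-Y_i)+\ee Y_i))^{p}$ are each at most $C$ times the third summand of \eqref{01}; likewise $\sum_i C_{\vv}(|\widetilde Y_i|^{p})\le C\sum_i C_{\vv}(|Y_i|^{p})$. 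Thus, modulo the reconciliation of $\sum_i C_{\vv}(|Y_i|^{p})$ with $\sum_i\ee|Y_i|^{p}$ discussed below, Steps 2 and 3 yield $\ee\bigl|\sum_{i=m}^{n}Y_i\bigr|^{p}\le C\,G(m,n)$ for every block, and Step 1 finishes. I expect this reconciliation to be the crux: a naive tail integration naturally produces $C_{\vv}(|Y_i|^{p})$, which by Lemma \ref{lem01} dominates $\ee|Y_i|^{p}$ but is genuinely larger in general, so obtaining \eqref{01} as stated forces one to run the truncation argument with Lipschitz cut-offs of $|Y_i|^{p}$ throughout and estimate the resulting quantities directly through $\ee$; equally delicate is tuning the truncation level against the integration variable $t$ so that the bounded-part tail decays fast enough to reproduce the exponents $p/2$ and $p$ at once. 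A route that meets the form of \eqref{01} more directly is a recursive Taylor expansion of $x\mapsto|x|^{p}$ about $\widetilde S_{k-1}$, evaluated at $\widetilde S_{k-1}+\widetilde Y_k$, in which negative dependence annihilates the leading cross terms; but there the sign-mixed parts of $\ee[|\widetilde S_{k-1}|^{p-2}\widetilde Y_k^{2}]$ cannot be decoupled by negative dependence, and bounding them without picking up a spurious power of $n$ is itself the obstacle.
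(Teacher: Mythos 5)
You should first note that the paper contains no proof of this statement at all: Lemma \ref{lem02} is imported verbatim from Theorem 2.1 of \cite{zhang2016rosenthal}, so the only meaningful question is whether your sketch would stand as a self-contained proof, and it would not, for two concrete reasons. First, the M\'oricz-type reduction in Step 1 does not apply in the form you need: with a superadditive $G$ entering to the first power, M\'oricz's theorem only yields $\ee[\max_{0\le i\le n}|S_i|^p]\le C(\log 2n)^p G(1,n)$; the log-free conclusion requires a premise of the form $\ee|S(m,n)|^p\le g(m,n)^{\alpha}$ with $g$ superadditive and $\alpha>1$, and the Rosenthal bound $G$ cannot be put in that shape because its first summand $\sum_i\ee|Y_i|^p$ is additive over blocks, so $G^{1/\alpha}$ is not superadditive for any $\alpha>1$. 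Second, and more seriously, Step 2 --- the heart of the matter --- is only announced: the Fuk--Nagaev/Bernstein input is not stated, the tuning of the truncation fraction against $p$ (needed to make the bounded-part tail integrable against $t^{p-1}$ and to produce exactly the exponents $p/2$ and $p$, with no spurious dependence on $n$) is not carried out, and, as you yourself concede, the large-jump part of the decomposition delivers $\sum_i C_{\vv}(|Y_i|^p)$ rather than $\sum_i\ee|Y_i|^p$. Under a sub-linear expectation there is no layer-cake identity tying $\ee|X|^p$ to $\int_0^\infty p\,t^{p-1}\vv(|X|>t)\,\dif t$; Lemma \ref{lem01} gives only the inequality $\ee|X|^p\le C_{\vv}(|X|^p)$, and the reverse fails in general. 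Hence the route through capacities can at best prove \eqref{01} with $\ee|Y_i|^p$ replaced by the genuinely larger $C_{\vv}(|Y_i|^p)$, which is a strictly weaker statement than the lemma.

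The remaining ingredients of your sketch (preservation of negative dependence under the monotone $\CC_{l,Lip}$ truncation, the event inclusion, Step 3's treatment of the centring via $\ee Y+\ee(-Y)\ge 0$) are sound, but they do not close these two gaps. Any proof of \eqref{01} as stated must work with $\ee$ throughout rather than pass through $\vv$ and back --- for instance via the expansion-and-induction argument you mention and abandon at the end (smoothing $|x|^p$ within $\CC_{l,Lip}$, using negative dependence to decouple $\ee[\varphi(S_{k-1})\psi(Y_k)]$ for monotone $\varphi,\psi$, and closing a recursion in $\ee\max_{k\le n}|S_k|^p$ to absorb the term $\ee[|S_{k-1}|^{p-2}Y_k^2]$ you flag) --- which is essentially how the cited source proceeds, and which also yields the maximal form directly without a generic maximal-inequality transfer.
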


	\section{Main Results}
	Our main results are the following.
	\begin{thm}\label{thm3.1} Assume that $h$ is a function slowly varying at infinity, $1\le p<2$, and $r>1$. Let $\{X_n,  n\ge 1\}$ be a moving average process based on a sequence of $\{Y_i,-\infty<i<\infty\}$ of negatively dependent random variables, identically distributed as $Y$ under sub-linear expectation space $(\Omega,\HH,\ee)$. Suppose that $\ee(Y)=\ee(-Y)=0$ and $C_{\vv}\left(|Y|^{rp}h(|Y|^p)\right)<\infty$. Then \\
		{\rm (\rmnum{1})}  $\sum_{n=1}^{\infty}n^{r-2}h(n)\vv\left\{\max_{1\le k\le n}|S_k|\ge \varepsilon n^{1/p}\right\}<\infty$, for all $\varepsilon>0$,\\
		and\\
		{\rm (\rmnum{2}}   $\sum_{n=1}^{\infty}n^{r-2}h(n)\vv\left\{\sup_{k\ge n}\left|S_k/k^{1/p}\right|\ge \varepsilon\right\}<\infty$, for all $\varepsilon>0$.
	\end{thm}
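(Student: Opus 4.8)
The plan is to prove part (\rmnum{1}) directly and then deduce part (\rmnum{2}) from it. For part (\rmnum{1}), I first exploit the moving-average structure: since
$$
S_k=\sum_{l=1}^{k}X_l=\sum_{i=-\infty}^{\infty}a_i\sum_{j=1}^{k}Y_{i+j},
$$
absolute summability of $\{a_i\}$ gives the pointwise bound $\max_{1\le k\le n}|S_k|\le\sum_{i}|a_i|\,M_{n,i}$ with $M_{n,i}:=\max_{1\le k\le n}\big|\sum_{j=1}^{k}Y_{i+j}\big|$. For fixed $n$, I truncate each $Y_m$ at level $n^{1/p}$ by a function in $\CC_{l,Lip}$, writing $Y_m=Y_m'+Y_m''$ with $|Y_m'|\le 2n^{1/p}$ and $|Y_m''|\le|Y_m|I(|Y_m|>n^{1/p})$; correspondingly $\sum_i|a_i|M_{n,i}$ splits into a bounded part $H_1$ (from the $Y_m'$) and a tail part $H_2$ (from the $Y_m''$). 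It then suffices to show $\sum_{n}n^{r-2}h(n)\,\vv\{H_\ell\ge\varepsilon n^{1/p}/2\}<\infty$ for $\ell=1,2$.

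For the tail part, $\max_{1\le k\le n}\big|\sum_{j=1}^{k}Y_{i+j}''\big|\le\sum_{j=1}^{n}|Y_{i+j}''|$, so Markov's inequality, countable sub-additivity of $\ee$, identical distribution, and Lemma \ref{lem01} give
$$
\vv\{H_2\ge\tfrac{\varepsilon}{2}n^{1/p}\}\le\frac{2}{\varepsilon n^{1/p}}\,\ee\Big[\sum_{i}|a_i|\sum_{j=1}^{n}|Y_{i+j}''|\Big]\le Cn^{1-1/p}\,C_{\vv}\big(|Y|I(|Y|>n^{1/p})\big),
$$
whence $\sum_n n^{r-2}h(n)\vv\{H_2\ge\varepsilon n^{1/p}/2\}\le C\sum_n n^{r-1-1/p}h(n)\,C_{\vv}(|Y|I(|Y|>n^{1/p}))$. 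The decisive step is a Fubini-type interchange of $\sum_n$ with $C_{\vv}$ (legitimate by countable sub-additivity of $\vv$): using $r>1$, $1\le p<2$ and standard Karamata/Potter estimates for the slowly varying $h$, this series collapses to a constant multiple of $C_{\vv}(|Y|^{rp}h(|Y|^p))<\infty$ (equivalently, of the series $\sum_m m^{r-1}h(m)\vv(|Y|>m^{1/p})$).

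For the bounded part, fix $q\ge 2$ chosen sufficiently large in terms of $p$ and $r$ (in particular $q>rp$). Markov's inequality with power $q$, then convexity of $t\mapsto t^{q}$ together with $\sum_i|a_i|<\infty$ to pull out the weights $|a_i|$, and then the Rosenthal-type inequality of Lemma \ref{lem02} — applied uniformly in $i$ to the family $\{Y_{i+j}'\}_{j}$, which is negatively dependent (truncation is coordinatewise nondecreasing) and identically distributed — give
$$
\vv\{H_1\ge\tfrac{\varepsilon}{2}n^{1/p}\}\le Cn^{-q/p}\Big\{n\,\ee|Y_1'|^{q}+\big(n\,\ee|Y_1'|^{2}\big)^{q/2}+\big(n\big[(-\ee(-Y_1'))^{-}+(\ee Y_1')^{+}\big]\big)^{q}\Big\}.
$$
The hypothesis $\ee(Y)=\ee(-Y)=0$ enters precisely here: it forces $(-\ee(-Y_1'))^{-}+(\ee Y_1')^{+}\le C\,C_{\vv}(|Y|I(|Y|>n^{1/p}))=o(n^{1/p-1})$, so the third term is summable against $n^{r-2-q/p}h(n)$ once $q$ is large (using $r>1$). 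The first two terms, after writing $\ee|Y_1'|^{q}\le\ee[|Y|^{q}I(|Y|\le n^{1/p})]+n^{q/p}\vv(|Y|>n^{1/p})$ and similarly for $\ee|Y_1'|^2$, are handled by the same interchange of $\sum_n$ with $C_{\vv}$, using $q>rp$, $p<2$, $r>1$ and properties of slowly varying functions, each sum again reducing to $C_{\vv}(|Y|^{rp}h(|Y|^p))<\infty$. This proves part (\rmnum{1}).

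Part (\rmnum{2}) follows from part (\rmnum{1}): for $2^m\le n<2^{m+1}$ one has
$$
\Big\{\sup_{k\ge n}\big|S_k/k^{1/p}\big|\ge\varepsilon\Big\}\subseteq\bigcup_{l\ge m}\Big\{\max_{1\le k<2^{l+1}}|S_k|\ge\varepsilon 2^{l/p}\Big\},
$$
so summing $n^{r-2}h(n)$ over the block $[2^m,2^{m+1})$ (comparable to $2^{m(r-1)}h(2^m)$ since $r>1$), using countable sub-additivity, interchanging the two sums, and noting $\sum_{m\le l}2^{m(r-1)}h(2^m)\asymp 2^{l(r-1)}h(2^l)$, the whole sum is dominated by $C\sum_l 2^{l(r-1)}h(2^l)\vv\{\max_{1\le k\le 2^{l+1}}|S_k|\ge\varepsilon 2^{l/p}\}$, which is finite by part (\rmnum{1}) and the standard dyadic comparison. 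The step I expect to be hardest is not any single inequality but the interplay of the infinite moving-average sum with the nonlinearity of $\ee$: applying Lemma \ref{lem02} to the shifted blocks $(Y_{i+1}',\dots,Y_{i+n}')$ uniformly in $i$, pushing $\sum_i|a_i|$ through $\ee$ and $\vv$ by countable sub-additivity, and — where the moment hypothesis $C_{\vv}(|Y|^{rp}h(|Y|^p))<\infty$ is exactly consumed — carrying out the interchange of $\sum_n$ with $C_{\vv}$ together with the attendant slowly varying bookkeeping, which is delicate because $h$ may tend to $0$ or to $\infty$.
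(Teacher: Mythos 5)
Your overall route is the same as the paper's: truncate at level $n^{1/p}$, control the tail part by a first–moment Markov estimate plus a Fubini/Karamata interchange that reduces everything to $C_{\vv}\left(|Y|^{rp}h(|Y|^p)\right)$, control the truncated part by Markov with a large power $q$, pull the weights $|a_i|$ through by H\"older/convexity, and invoke the Rosenthal-type inequality of Lemma \ref{lem02}; part (\rmnum{2}) is deduced by exactly the dyadic blocking of the paper's Lemma \ref{lem03}. The one step that fails as written is your treatment of the mean (third Rosenthal) term. You bound $(-\ee(-Y_1'))^{-}+(\ee Y_1')^{+}\le C\,C_{\vv}\left(|Y|I(|Y|>n^{1/p})\right)=o(n^{1/p-1})$ and claim this makes the term ``summable against $n^{r-2-q/p}h(n)$ once $q$ is large''. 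With only that rate the contribution is $n^{r-2-q/p}h(n)\bigl(n\cdot o(n^{1/p-1})\bigr)^{q}=o\bigl(n^{r-2}h(n)\bigr)$: the exponent in $q$ cancels exactly, $\sum_n n^{r-2}h(n)$ diverges for $r>1$, and an unquantified $o(1)$ factor cannot restore convergence, so enlarging $q$ buys nothing at this point.

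The repair is the sharper rate that the moment hypothesis actually delivers, and it is precisely what the paper uses when estimating its term $\Rmnum{2}_1$: since for $|y|>\mu n^{1/p}$ one has $|y|\le C\,|y|^{rp}h(|y|^p)\,n^{-(rp-1)/p}h(n)^{-1}$ (Potter-type bounds for the slowly varying $h$, $rp>1$), it follows that $C_{\vv}\left(|Y|I(|Y|>n^{1/p})\right)\le C\,C_{\vv}\left(|Y|^{rp}h(|Y|^p)\right)n^{-(rp-1)/p}h(n)^{-1}$, and the third term then contributes at most $C\sum_n n^{r-2-q(r-1)}h(n)^{1-q}<\infty$ for $q$ large, using $r>1$. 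With this correction (and with the two-case analysis $rp<2$ versus $rp\ge 2$ for the term $\bigl(n\ee|Y_1'|^2\bigr)^{q/2}$, which you only gesture at but which is where the choice of $q$ genuinely matters), your argument coincides with the paper's proof; the only structural difference is cosmetic: the paper first removes the centering $\ee Y_j'$ via its Lemma \ref{lem05} and applies Lemma \ref{lem02} to $Y_j'-\ee[Y_j']$, whereas you keep $Y_j'$ uncentred and let the third Rosenthal term absorb the means.
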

	The following theorem investigates the case $r=1$.
	
	\begin{thm}\label{thm3.2}  Assume that $h$ is a function slowly varying at infinity and $1\le p<2$. Suppose that $\sum_{i=-\infty}^{\infty}|a_i|^{\theta}<\infty$, where $\theta\in (0, 1)$ if $p=1$ and $\theta=1$ if $1<p<2$. Assume that $\{X_n,  n\ge 1\}$ is a moving average process based on a sequence of $\{Y_i,-\infty<i<\infty\}$ of negatively dependent random variables, identically distributed as $Y$ under sub-linear expectation space $(\Omega,\HH,\ee)$. Suppose that $\ee(Y)=\ee(-Y)=0$ and $C_{\vv}\left(|Y|^ph(|Y|^p)<\infty\right)$. Then
		$$
		\sum_{n=1}^{\infty}\frac{h(n)}{n}\vv\left\{\max_{1\le k\le n}|S_k|\ge \varepsilon n^{1/p}\right\}<\infty, \mbox{   for all $\varepsilon>0$.}
		$$
		In particular, the conditions that  $\ee Y=\ee(-Y)=0$, $C_{\vv}\left(|Y|^p\right)<\infty$ and $\vv$ is continuous imply the following Marcinkiewicz-Zygmund strong law of large numbers,
		$S_n/n^{1/p}\rightarrow 0$ a.s. $\vv$, i.e.,
		\[
		\vv\left\{\Omega\setminus\{\lim_{n\rightarrow\infty}S_n/n^{1/p}=0\}\right\}=0.
		\]
	\end{thm}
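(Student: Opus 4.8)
The plan is to handle Theorem~\ref{thm3.2} as the boundary case $r=1$ of Theorem~\ref{thm3.1}, running the same scheme --- coefficient reduction, truncation, maximal Rosenthal inequality --- and then to deduce the Marcinkiewicz--Zygmund law from the complete convergence by a dyadic blocking argument together with a Borel--Cantelli lemma for the capacity $\vv$. First, interchanging the absolutely convergent sums gives $S_k=\sum_{m=1}^{k}X_m=\sum_i a_i\bigl(\sum_{l=1}^{k}Y_{i+l}\bigr)$, hence $\max_{1\le k\le n}|S_k|\le\sum_i|a_i|\max_{1\le k\le n}\bigl|\sum_{l=1}^{k}Y_{i+l}\bigr|$, and in all cases $\sum_i|a_i|=:A<\infty$ (for $p=1$ this follows from $\sum_i|a_i|^{\theta}<\infty$ with $\theta<1$). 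For the parts to which a moment bound of order $q$ will be applied I would not split the capacity over $i$ --- that series diverges --- but bound the capacity by a $q$-th moment, use the convexity inequality $\bigl(\sum_i|a_i|\xi_i\bigr)^{q}\le A^{q-1}\sum_i|a_i|\xi_i^{q}$, and note that by identical distribution $\ee\bigl[\bigl(\max_{k\le n}|\sum_{l=1}^{k}(\cdot)_{i+l}|\bigr)^{q}\bigr]$ does not depend on $i$; the two sums over $i$ then collapse into a harmless factor $A^{q}$, reducing the problem to a maximal $q$-th moment for partial sums of the negatively dependent sequence $\{Y_l\}$ itself. Only in the tail estimate for $p=1$ will I need the stronger hypothesis $\sum_i|a_i|^{\theta}<\infty$, $\theta<1$; there I would split the capacity over $i$ with weights $w_i=|a_i|^{\theta}/\sum_l|a_l|^{\theta}$, giving thresholds proportional to $n\,|a_i|^{\theta-1}$, and apply a moment bound of order $\beta$ with $(1-\theta)\beta=\theta$, so that $\sum_i|a_i|^{(1-\theta)\beta}=\sum_i|a_i|^{\theta}<\infty$ restores summability in~$i$.

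\emph{Truncation, centring, and Lemma~\ref{lem02}.} For each $n$ write $Y=Y'+Y''$ by cutting $Y$ off at level $n^{1/p}$ with monotone functions from $\CC_{l,Lip}(\rr)$, so that negative dependence is preserved for both pieces (cf.\ the remark before Lemma~\ref{lem01}); this induces $S_k=S_k'+S_k''$, and I split $\{\max_k|S_k|\ge\varepsilon n^{1/p}\}$ into the bounded part and the tail part. Because $\ee(Y)=\ee(-Y)=0$ one gets $|\ee(Y')|\le\ee|Y''|$ and likewise for $\ee(-Y')$, so $n\,|\ee(Y')|=o(n^{1/p})$ under $C_{\vv}(|Y|^{p}h(|Y|^{p}))<\infty$ and the ``mean'' of $Y'$ may be subtracted off the bounded-part event at negligible cost. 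To the centred bounded partial sums I then apply Lemma~\ref{lem02} with a sufficiently large $q\ge 2$; combining Markov's inequality, the reduction above, and the standard Karamata-type estimates for the slowly varying function $h$ (which transfer verbatim to the $C_{\vv}$-integrals, since $C_{\vv}(\cdot)$ is just $\int_0^\infty\vv(\cdot>x)\,\dif x$ up to the negative-part correction), the exponents of the leading Rosenthal terms cancel and the resulting series over $n$ is dominated by a constant times $C_{\vv}(|Y|^{p}h(|Y|^{p}))<\infty$. The extra ``mean-gap'' term arising from sub-linearity, of the form $\bigl(n\,(\ee(Y')+\ee(-Y'))\bigr)^{q}$, is treated in the same way: $\ee(Y)=\ee(-Y)=0$ forces $\ee(Y')+\ee(-Y')\to0$, and raising this to the power $q$ with $q$ large makes the associated series converge.

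\emph{The tail part --- the main obstacle.} For the tail one has $\max_k|S_k''|\le\sum_i|a_i|\sum_{l=1}^{n}|Y_{i+l}''|$, and a first-moment estimate produces a series comparable to $\sum_n h(n)\,\vv(|Y|^{p}>n)+\sum_n h(n)\,n^{-1/p}\!\int_{n^{1/p}}^{\infty}\vv(|Y|>y)\,\dif y$. For $1<p<2$, Karamata's theorem shows both pieces are bounded by a constant times $C_{\vv}(|Y|^{p}h(|Y|^{p}))<\infty$ (here the inequality $1/p-1>-1$ is exactly what allows one to sum $\sum_{n\le y}n^{-1/p}h(n)$). For $p=1$ the second piece carries an additional logarithmic factor and cannot be absorbed using $\sum_i|a_i|<\infty$ alone; this is where the hypothesis $\sum_i|a_i|^{\theta}<\infty$ with $\theta<1$ must be used, through the weighted splitting of the first step, balancing the gain $|a_i|^{1-\theta}$ against a moment of order strictly larger than one while keeping the double series --- over $n$ and over the coefficient index $i$ --- simultaneously convergent, for instance by decomposing the range of $|Y|$ into dyadic shells so that no power of $|Y|$ beyond $|Y|\,h(|Y|)$ ever enters. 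I expect this $p=1$ tail estimate to be the most delicate point of the argument.

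\emph{Marcinkiewicz--Zygmund strong law.} Taking $h\equiv 1$, the hypotheses $\ee(Y)=\ee(-Y)=0$ and $C_{\vv}(|Y|^{p})<\infty$ give, by the first assertion, $\sum_n n^{-1}\vv\{\max_{1\le k\le n}|S_k|\ge\varepsilon n^{1/p}\}<\infty$ for every $\varepsilon>0$. On the block $2^{m}\le n<2^{m+1}$ one has $n^{-1}\ge 2^{-m-1}$ and $\{\max_{k\le n}|S_k|\ge\varepsilon n^{1/p}\}\supseteq\{\max_{k\le 2^{m}}|S_k|\ge\varepsilon\,2^{(m+1)/p}\}$, so summing over the block and then over $m$ yields $\sum_m\vv\{\max_{k\le 2^{m}}|S_k|\ge\varepsilon\,2^{(m+1)/p}\}<\infty$. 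Since $\vv$ is continuous and countably sub-additive, the Borel--Cantelli lemma gives $\vv\bigl(\limsup_m\{\max_{k\le 2^{m}}|S_k|\ge\varepsilon\,2^{(m+1)/p}\}\bigr)=0$; off this $\vv$-null set, whenever $2^{m}\le n<2^{m+1}$ with $m$ large one has $|S_n|\le\max_{k\le 2^{m+1}}|S_k|<\varepsilon\,2^{2/p}\,n^{1/p}$, so $\limsup_n|S_n|/n^{1/p}\le\varepsilon\,2^{2/p}$ a.s.\ $\vv$. Letting $\varepsilon$ run through $2^{-2/p}/j$, $j\ge1$, and using countable sub-additivity of $\vv$ once more, $\vv\bigl(\Omega\setminus\{\lim_n S_n/n^{1/p}=0\}\bigr)=0$, which is the claimed Marcinkiewicz--Zygmund strong law.
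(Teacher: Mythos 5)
Your reduction scheme (truncation at $n^{1/p}$, centring, Lemma \ref{lem02}, dyadic blocking plus Borel--Cantelli for $\vv$) is the paper's scheme, and your final Marcinkiewicz--Zygmund step is correct and essentially identical to the paper's. But the complete-convergence part has two genuine gaps at $r=1$. First, the $p=1$ tail estimate, which you yourself flag as ``the most delicate point,'' is not actually proved, and your sketched repair does not go through as stated: splitting the capacity over $i$ with weights proportional to $|a_i|^{\theta}$ and applying a Markov bound of order $\beta=\theta/(1-\theta)>1$ requires a finite $\beta$-th moment of the untruncated tail $Y_j''$, which is not available under $C_{\vv}(|Y|^ph(|Y|^p))<\infty$ since $\beta>p$; that is exactly why you retreat to an unexecuted ``dyadic shells'' idea. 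The step you are missing is the paper's: apply Markov of fractional order $\theta\le 1$ and use $\bigl|\sum_i a_i\sum_j Y_j''\bigr|^{\theta}\le \sum_i|a_i|^{\theta}\sum_j|Y_j''|^{\theta}$ together with (countable) sub-additivity of $\ee$ --- no dependence structure and no moments above $\theta$ are needed --- giving $\Rmnum{1}\le C\sum_n h(n)n^{-\theta/p}\ee|Y_1''|^{\theta}$, which a Karamata computation (using $\theta<p$) bounds by $CC_{\vv}(|Y|^ph(|Y|^p))$; this treats $\theta<1$ ($p=1$) and $\theta=1$ ($1<p<2$) uniformly and is precisely where $\sum_i|a_i|^{\theta}<\infty$ enters.

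Second, your treatment of the ``mean-gap'' term by ``taking $q$ large'' fails in the boundary case $r=1$: since $n\,\ee|Y_1''|\le n^{1/p}\varepsilon_n$ with $\varepsilon_n=C_{\vv}\bigl(|Y|^pI\{|Y|>\mu n^{1/p}\}\bigr)\to 0$, the associated series is of the order $\sum_n n^{-1}h(n)\varepsilon_n^q$; the exponent in $n$ is $-1$ for every $q$, and $\varepsilon_n\to 0$ alone does not give convergence (take $h\equiv 1$ and $\varepsilon_n\sim(\log\log n)^{-1}$). The same borderline $n^{-1}$ exponent afflicts the middle Rosenthal term if you take $q>2$ and only use $\ee|Y_1'|^2\lesssim n^{(2-p)/p}$, so ``sufficiently large $q$'' does not make the exponents cancel favourably here. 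The paper sidesteps both issues at $r=1$ by taking $q=2$ in Lemma \ref{lem02} and then estimating the resulting squared quantities (its $\Rmnum{2}_2$) with a two-factor double-integral Karamata argument, in which one factor of $\vv\{|Y|>\cdot\}$ absorbs the weight $n^{1-2/p}h(n)$ and the other integrates to $C_{\vv}(|Y|^ph(|Y|^p))$; some argument of this finer type is needed to close your proof.
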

	\begin{rmk}Theorem \ref{thm3.2} complements Theorem 1 for independent, identically distributed random variables under sub-linear expectations in \cite{zhang2018marcinkiewicz}.
	\end{rmk}

	\section{Proofs of Main Results}
	We first present some lemmas.
	\begin{lem}\label{lem03} Suppose $r>1$, and $1\le p<2$. Then for any $\varepsilon>0$,
		\[\sum_{n=1}^{\infty}n^{r-2}h(n)\vv\left\{\sup_{k\ge n}\left|S_k/k^{1/p}\right|\ge \varepsilon\right\}\le \sum_{n=1}^{\infty}n^{r-2}h(n)\vv\left\{\sup_{k\ge n}\left|S_k\right|\ge \left(\varepsilon/2^{2/p}\right)n^{1/p}\right\}.
		\]
	\end{lem}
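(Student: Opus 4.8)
The plan is to reduce the stated inequality between the two series to a term-by-term comparison and to derive that comparison from the monotonicity of the capacity $\vv$. For $n\ge 1$ put
\[
A_n:=\left\{\sup_{k\ge n}\left|S_k/k^{1/p}\right|\ge \varepsilon\right\},\qquad
B_n:=\left\{\sup_{k\ge n}\left|S_k\right|\ge \left(\varepsilon/2^{2/p}\right)n^{1/p}\right\}.
\]
First I would check the inclusion $A_n\subseteq B_n$. The point is that for every $k\ge n$ one has $k^{1/p}\ge n^{1/p}$, hence $|S_k|/k^{1/p}\le n^{-1/p}|S_k|$, and since this holds for all such $k$ simultaneously,
\[
\sup_{k\ge n}\left|S_k/k^{1/p}\right|\le n^{-1/p}\sup_{k\ge n}|S_k|.
\]
Therefore, on the event $A_n$ we get $n^{-1/p}\sup_{k\ge n}|S_k|\ge\varepsilon$, i.e. $\sup_{k\ge n}|S_k|\ge\varepsilon n^{1/p}\ge(\varepsilon/2^{2/p})n^{1/p}$, which is exactly the defining inequality of $B_n$. (The factor $2^{-2/p}<1$ is not actually needed for this lemma; it is carried along because it is convenient in the subsequent estimates.)

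Next, by the monotonicity of the capacity $\vv$ (property (b) in the definition of a capacity), the inclusion $A_n\subseteq B_n$ gives $\vv(A_n)\le\vv(B_n)$ for every $n\ge 1$. Finally, since $h$ is slowly varying it is positive, so the weights $n^{r-2}h(n)$ are nonnegative; multiplying $\vv(A_n)\le\vv(B_n)$ by $n^{r-2}h(n)$ and summing over $n\ge1$ yields
\[
\sum_{n=1}^{\infty}n^{r-2}h(n)\,\vv(A_n)\le\sum_{n=1}^{\infty}n^{r-2}h(n)\,\vv(B_n),
\]
which is the assertion of the lemma.

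I do not expect a genuine obstacle here: the argument is purely soft and uses none of the probabilistic structure (negative dependence, the moving-average form, or Lemma \ref{lem02}). The only subtlety worth noting is that $\sup_{k\ge n}$ ranges over infinitely many indices and need not be attained, so one should avoid arguing through a single maximising $k$; the simultaneous inequality $|S_k|/k^{1/p}\le n^{-1/p}|S_k|$ sidesteps this. One could alternatively decompose the tail $\{k\ge n\}$ into dyadic blocks $\{2^i\le k<2^{i+1}\}$ and compare block by block — this is where a power of $2^{1/p}$ would appear organically — but the monotone bound above already yields the conclusion, with the constant to spare.
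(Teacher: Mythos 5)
Your argument is sound for the inequality exactly as typed, but the right-hand side of the lemma as printed is evidently a misprint: with $\sup_{k\ge n}|S_k|$ there, the bound is indeed the trivial inclusion you give, yet it would be useless, since the lemma is invoked in the proof of Theorem \ref{thm3.1} precisely to deduce assertion (ii) from assertion (i), whose series involves $\max_{1\le k\le n}|S_k|$. What the paper actually proves (and what the lemma must say, up to a constant $C$) is
\[
\sum_{n=1}^{\infty}n^{r-2}h(n)\vv\Bigl\{\sup_{k\ge n}\bigl|S_k/k^{1/p}\bigr|\ge \varepsilon\Bigr\}\le C\sum_{n=1}^{\infty}n^{r-2}h(n)\vv\Bigl\{\max_{1\le k\le n}|S_k|\ge \bigl(\varepsilon/2^{2/p}\bigr)n^{1/p}\Bigr\},
\]
and your inclusion $A_n\subseteq B_n$ cannot yield this: for a fixed $n$ the event $\{\sup_{k\ge n}|S_k/k^{1/p}|\ge\varepsilon\}$ concerns arbitrarily large indices and is in no way contained in an event about $|S_k|$ for $k\le n$ only. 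Your own remark that the factor $2^{2/p}$ ``is not actually needed'' is the telltale sign that you were proving the mis-typed, contentless version rather than the one the rest of the paper uses.

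The paper's proof is exactly the dyadic-block argument you mention in passing and then discard. One groups $n\in[2^{m-1},2^m)$, notes that $\{\sup_{k\ge 2^{m-1}}|S_k/k^{1/p}|\ge\varepsilon\}$ is contained in $\bigcup_{l\ge m}\{\max_{2^{l-1}\le k<2^l}|S_k|\ge\varepsilon 2^{(l-1)/p}\}$ (since $k^{1/p}\ge 2^{(l-1)/p}$ on the $l$-th block), uses the (countable) sub-additivity of $\vv$ to replace the capacity of this union by the sum over $l\ge m$, interchanges the sums over $m$ and $l$, and then uses $r>1$ together with the slow variation of $h$ to get $\sum_{m=1}^{l}2^{m(r-1)}h(2^m)\le C\,2^{l(r-1)}h(2^l)$; converting the resulting $l$-sum back into a sum over $n\in[2^{l},2^{l+1})$ is exactly where the threshold $(\varepsilon/2^{2/p})n^{1/p}$, hence the factor $2^{2/p}$, arises. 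So the missing ingredients in your write-up are the blocking, the countable sub-additivity of $\vv$, and the hypothesis $r>1$ --- none of which appear in your argument, and all of which are essential for the version of the lemma that Theorem \ref{thm3.1} actually requires.
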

	\begin{proof}
		We see that
		$$
		\begin{aligned}
			\sum_{n=1}^{\infty}n^{r-2}h(n)\vv\left\{\sup_{k\ge n}\left|S_k/k^{1/p}\right|\ge \varepsilon\right\}&=\sum_{m=1}^{\infty}\sum_{n=2^{m-1}}^{2^{m}-1}n^{r-2}h(n)\vv\left\{\sup_{k\ge n}\left|S_k/k^{1/p}\right|\ge \varepsilon\right\}\\
			&\le C\sum_{m=1}^{\infty}\vv\left\{\sup_{k\ge 2^{m-1}}\left|S_k/k^{1/p}\right|\ge \varepsilon\right\}\sum_{n=2^{m-1}}^{2^{m}-1}2^{m(r-2)}h(2^m)\\
			&\le C\sum_{m=1}^{\infty}2^{m(r-1)}h(2^m)\vv\left\{\sup_{k\ge 2^{m-1}}\left|S_k/k^{1/p}\right|\ge \varepsilon\right\}\\
			&=C\sum_{m=1}^{\infty}2^{m(r-1)}h(2^m)\vv\left\{\sup_{l\ge m}\max_{2^{l-1}\le k< 2^{l}}\left|S_k\right|\ge \varepsilon 2^{(l-1)/p}\right\}\\
			&\le C\sum_{m=1}^{\infty}2^{m(r-1)}h(2^m)\sum_{l=m}^{\infty}\vv\left\{\max_{1\le k< 2^{l}}\left|S_k\right|\ge \varepsilon 2^{(l-1)/p}\right\}\\
			&=C\sum_{l=1}^{\infty}\vv\left\{\max_{1\le k< 2^{l}}\left|S_k\right|\ge \varepsilon 2^{(l-1)/p}\right\}\sum_{m=1}^{l}2^{m(r-1)}h(2^m)\\
			&\le C\sum_{l=1}^{\infty}2^{l(r-1)}h(2^l)\vv\left\{\max_{1\le k< 2^{l}}\left|S_k\right|\ge \varepsilon 2^{(l-1)/p}\right\}\\
			&\le C\sum_{l=1}^{\infty}\sum_{n=2^{l}}^{2^{l+1}-1}n^{r-2}h(n)\vv\left\{\max_{1\le k\le n}\left|S_k\right|\ge (\varepsilon/2^{2/p})n^{1/p}\right\}\\
			&\le C\sum_{n=1}^{\infty}n^{r-2}h(n)\vv\left\{\max_{1\le k\le n}\left|S_k\right|\ge (\varepsilon/2^{2/p})n^{1/p}\right\}.
		\end{aligned}
		$$
	\end{proof}
	\begin{lem}\label{lem04}Let $Y$ be a random variable with $C_{\vv}\left(|Y|^{rp}h(|Y|^p)\right)<\infty$, where $r\ge 1$, and $p\ge 1$. Write $Y'=-n^{-1/p}I\{Y<-n^{-1/p}\}+YI\{|Y|\le n^{1/p}\}+n^{1/p}I\{Y>n^{1/p}\}$. If $q>rp$, then
		\begin{equation*}
			\sum_{n=1}^{\infty}n^{r-1-q/p}h(n)\ee|Y'|^q\le CC_{\vv}\left(|Y|^{rp}h(|Y|^p)\right).
		\end{equation*}
	\end{lem}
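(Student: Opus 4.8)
The plan is to reduce the tail sum over $n$ to a tail-probability estimate of $|Y|^p$ and then recognize the resulting series as $C_{\vv}(|Y|^{rp}h(|Y|^p))$ up to a constant, using the defining formula of $C_{\vv}$ together with Potter-type bounds for the slowly varying function $h$. First I would split $|Y'|^q$ according to the three regions defining $Y'$: on $\{|Y|\le n^{1/p}\}$ we have $|Y'|^q=|Y|^q I\{|Y|\le n^{1/p}\}$, while on $\{|Y|>n^{1/p}\}$ (combining both tails) $|Y'|^q=n^{q/p}I\{|Y|>n^{1/p}\}$. Applying $\ee$ and Lemma~\ref{lem01} to dominate $\ee$ by $C_{\vv}$, it suffices to bound
\[
\sum_{n=1}^{\infty}n^{r-1-q/p}h(n)\,C_{\vv}\!\left(|Y|^q I\{|Y|\le n^{1/p}\}\right)
\quad\text{and}\quad
\sum_{n=1}^{\infty}n^{r-1}h(n)\,\vv\!\left(|Y|>n^{1/p}\right).
\]
For the second sum, since $r\ge 1$ one writes $\vv(|Y|>n^{1/p})=\vv(|Y|^{rp}h(|Y|^p)>n^{r}h(n))$ up to the monotonicity of $x\mapsto x^{r}h(x)$ for large $x$, sums a geometric-type series $\sum_{m\ge n}$ after dyadic blocking, and identifies the outcome with $\int_0^\infty \vv(|Y|^{rp}h(|Y|^p)>x)\,dx \le C_{\vv}(|Y|^{rp}h(|Y|^p))$.

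For the truncated-moment sum, the natural route is Fubini/Abel summation: write $C_{\vv}(|Y|^qI\{|Y|\le n^{1/p}\})$ via the layer-cake formula as an integral of $\vv(|Y|^qI\{|Y|\le n^{1/p}\}>x)$, interchange the sum over $n$ with the integral, and for each fixed $x$ sum $\sum_{n\ge \lceil x^{p/q}\rceil} n^{r-1-q/p}h(n)$; because $q>rp$, i.e. $r-1-q/p<-1$, this series converges and is comparable (again by Potter's bounds and the standard estimate $\sum_{n\ge N}n^{-1-\delta}h(n)\asymp N^{-\delta}h(N)$ for $\delta>0$) to $x^{(rp-q)/q}h(x^{p/q})\asymp x^{(rp-q)/q}h(x)$ up to slowly varying corrections. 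Substituting back and changing variables $x=t^{q/p}$ turns the integral into $C\int_0^\infty \vv(|Y|^{rp}h(|Y|^p)>t)\,dt$, which is $\le C\,C_{\vv}(|Y|^{rp}h(|Y|^p))$.

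The main obstacle is the careful bookkeeping with the slowly varying function $h$: one must repeatedly invoke Potter's inequality (for every $\delta>0$ there is $C_\delta$ with $h(\lambda x)\le C_\delta\lambda^{\delta}h(x)$ for $\lambda\ge 1$) to absorb the $h$-factors into the genuinely polynomial decay, and one must verify that the function $x\mapsto x^{r}h(x)$ is eventually monotone so that the inversion $\vv(|Y|>n^{1/p})\leftrightarrow\vv(|Y|^{rp}h(|Y|^p)>Cn^{r}h(n))$ is legitimate; the contribution of the finitely many small $n$ where monotonicity may fail is harmless since it is a finite sum of finite terms (each $\ee|Y'|^q<\infty$ by boundedness of $Y'$ and Lemma~\ref{lem01}). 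Once these slowly varying estimates are in hand, the rest is the two Fubini interchanges and a change of variables, which are routine. I would also note that the countable sub-additivity of $\ee$ is what legitimizes passing $\ee$ inside the implicit series when we decompose $|Y'|^q$, and that all manipulations of $C_{\vv}$ use only its definition as a Choquet-type integral against the sub-additive capacity $\vv$.
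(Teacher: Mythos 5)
Your proposal is correct and follows essentially the same route as the paper's proof: dominate $\ee|Y'|^q$ by $C_{\vv}(|Y'|^q)$ via Lemma \ref{lem01}, represent $C_{\vv}$ as a layer-cake integral truncated at $n^{1/p}$, interchange the sum over $n$ with the integral, exploit $r-1-q/p<-1$ (i.e.\ $q>rp$) through a Karamata-type tail estimate for the slowly varying weight, and change variables to recover $C_{\vv}\left(|Y|^{rp}h(|Y|^p)\right)$. The only immaterial differences are that you split $Y'$ into truncated and tail parts before applying the layer-cake formula (the paper keeps $|Y'|^q$ whole and splits the inner integral at $1$), and your parenthetical replacement $h(x^{p/q})\asymp h(x)$ is neither needed nor literally valid for general slowly varying $h$; keeping $h(x^{p/q})$, as your subsequent change of variables $x=t^{q/p}$ in fact does, is exactly what makes the identification with $C_{\vv}\left(|Y|^{rp}h(|Y|^p)\right)$ go through.
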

	\begin{proof}Since $r-q/p<0$, by Lemma \ref{lem01}, and similar proof of Lemma 2.2 in \cite{zhong2017complete}, we see that
		$$
		\begin{aligned}
			&\sum_{n=1}^{\infty}n^{r-1-q/p}h(n)\ee|Y'|^q\le \sum_{n=1}^{\infty}n^{r-1-q/p}h(n)C_{\vv}\left\{|Y'|^q\right\}\\
			&\quad\le \sum_{n=1}^{\infty}n^{r-1-q/p}h(n)\int_{0}^{n^{1/p}}\vv\left\{|Y'|^q>x^q\right\}qx^{q-1}\dif x\\
			&\quad\le C\int_{1}^{\infty}y^{r-1-q/p}h(y)\left[\int_{0}^{1}+\int_{1}^{y^{1/p}}\right]\vv\left\{|Y'|^q>x^q\right\}x^{q-1}\dif x\dif y\\
			&\quad\le C \int_{0}^{1}\vv\left\{|Y|^q>x\right\}\dif x\int_{1}^{\infty}y^{r-1-q/p}h(y)\dif y\\
		\end{aligned}
		$$
		$$
		\begin{aligned}
			&\quad \quad +C\int_{1}^{\infty}\vv\left\{|Y|>x\right\}x^{q-1}\int_{x^p}^{\infty}y^{r-1-q/p}h(y)\dif y\dif x\\
			&\quad\le C+C\int_{1}^{\infty}\vv\left\{|Y|>x\right\}h(x^p)x^{rp-1}\dif x\\
			&\quad\le CC_{\vv}\left(|Y|^{rp}h(|Y|^p)\right)<\infty.
		\end{aligned}
		$$
		
	\end{proof}
	
	In the rest of this paper, let $\frac12<\mu<1$, $g(y)\in \CC_{l,Lip}(\rr)$, such that $0\le g(y)\le 1$ for all $y$ and $g(y)=1$ if $|y|\le \mu$, $g(y)=0$, if $|y|>1$. And $g(y)$ is a decreasing function for $y\ge 0$. The next lemma presents an important fact in the proofs of Theorems \ref{thm3.1} and \ref{thm3.2}.
	\begin{lem}\label{lem05}
		Assume that $h$ is a function slowly varying at infinity and $p\ge 1$. Assume that $\{X_n,  n\ge 1\}$ is a moving average process based on a sequence of $\{Y_i,-\infty<i<\infty\}$ of negatively dependent random variables, identically distributed as $Y$ with $\ee(Y)=\ee(-Y)=0$, $C_{\vv}\left(|X|^p\right)<\infty$ under sub-linear expectation space $(\Omega,\HH,\ee)$. For any $\varepsilon>0$,  write
		\[
		\Rmnum{1}:=\sum_{n=1}^{\infty}n^{r-2}h(n)\vv\left\{\max_{1\le k\le n}\left|\sum_{i=-\infty}^{\infty}a_i\sum_{j=i+1}^{i+k}Y_j''\right|\ge \varepsilon n^{1/p}/2\right\},
		\]
		and
		\[
		\Rmnum{2}:=\sum_{n=1}^{\infty}n^{r-2}h(n)\vv\left\{\max_{1\le k\le n}\left|\sum_{i=-\infty}^{\infty}a_i\sum_{j=i+1}^{i+k}(Y_j'-\ee[Y_j'])\right|\ge \varepsilon n^{1/p}/4\right\},
		\]
		where
		\[Y_j'=-n^{1/p}I\{Y_j<-n^{-1/p}\}+|Y_j|I\{|Y_j|\le n^{1/p}\}+n^{1/p}I\{Y_j>n^{1/p}\},
		\]
		\[
		Y_j''=Y_j-Y_j'=(Y_j+n^{1/p})I\{Y_j<-n^{1/p}\}+(Y_j-n^{1/p})I\{Y_j>n^{1/p}\}\].
		If $\Rmnum{1}<\infty$ and $\Rmnum{2}<\infty$, then
		\[
		\sum_{n=1}^{\infty}n^{r-2}h(n)\vv\left\{\max_{1\le k\le n}\left|S_k\right|\ge \varepsilon n^{1/p}\right\}\le \Rmnum{1}+\Rmnum{2}<\infty.
		\]
	\end{lem}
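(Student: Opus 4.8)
The plan is to bound $\max_{1\le k\le n}|S_k|$ by the two truncated quantities inside $\Rmnum{1}$ and $\Rmnum{2}$ plus a deterministic drift, and then to show the drift is asymptotically negligible, so that on the event $\{\max_{1\le k\le n}|S_k|\ge\varepsilon n^{1/p}\}$ one of those two quantities must already be of order $n^{1/p}$. For fixed $k$ the sum $S_k=\sum_{n=1}^{k}X_n=\sum_{n=1}^{k}\sum_{i=-\infty}^{\infty}a_iY_{i+n}$ runs over only finitely many $n$; since $\sum_i|a_i|<\infty$ and, by Lemma \ref{lem01} with $C_{\vv}(|Y|^p)<\infty$ and $|Y|\le 1+|Y|^p$, $\ee|Y|\le C_{\vv}(|Y|)\le 1+C_{\vv}(|Y|^p)<\infty$, the two summations may be interchanged, giving $S_k=\sum_{i=-\infty}^{\infty}a_i\sum_{j=i+1}^{i+k}Y_j$. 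Splitting $Y_j=Y_j'+Y_j''$, then $Y_j'=(Y_j'-\ee[Y_j'])+\ee[Y_j']$, and using that the $Y_j$ are identically distributed so $\ee[Y_j']=\ee[Y_1']$ for every $j$, one obtains
\begin{align*}
S_k&=\sum_{i=-\infty}^{\infty}a_i\sum_{j=i+1}^{i+k}Y_j''+\sum_{i=-\infty}^{\infty}a_i\sum_{j=i+1}^{i+k}\bigl(Y_j'-\ee[Y_j']\bigr)\\
&\qquad{}+\Bigl(\sum_{i=-\infty}^{\infty}a_i\Bigr)\,k\,\ee[Y_1'].
\end{align*}

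The heart of the argument is to show $n^{1-1/p}|\ee[Y_1']|\to0$. Because $\ee[Y_1]=\ee[-Y_1]=0$, sub-additivity and constant preservation give $\ee[Y_1']=\ee[Y_1-Y_1'']\le\ee[Y_1]+\ee[-Y_1'']=\ee[-Y_1'']$ and, symmetrically, $\ee[-Y_1']\le\ee[Y_1'']$; together with $-\ee[Y_1']\le\ee[-Y_1']$ and monotonicity this yields $|\ee[Y_1']|\le\ee|Y_1''|$, which here replaces the classical identity $\ee[Y_1']=-\ee[Y_1'']$ that is unavailable under a sub-linear expectation. Since $|Y_1''|\le|Y_1|\,I\{|Y_1|>n^{1/p}\}$ and $|Y_1|\le n^{1/p-1}|Y_1|^p$ on $\{|Y_1|>n^{1/p}\}$, Lemma \ref{lem01} gives $|\ee[Y_1']|\le n^{1/p-1}C_{\vv}\bigl(|Y|^pI\{|Y|>n^{1/p}\}\bigr)$. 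Writing $C_{\vv}$ as an integral of tail capacities, $C_{\vv}\bigl(|Y|^pI\{|Y|>n^{1/p}\}\bigr)=n\,\vv(|Y|^p>n)+\int_n^{\infty}\vv(|Y|^p>x)\,\dif x\to0$ as $n\to\infty$, because $C_{\vv}(|Y|^p)<\infty$. Hence there is an $n_0$ with $\bigl|\bigl(\sum_i a_i\bigr)\,k\,\ee[Y_1']\bigr|\le n\,|\ee[Y_1']|\sum_i|a_i|\le\varepsilon n^{1/p}/4$ for all $n\ge n_0$ and all $1\le k\le n$.

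For $n\ge n_0$, combining the decomposition with this drift bound and the identity $\varepsilon n^{1/p}-\varepsilon n^{1/p}/4=\varepsilon n^{1/p}/2+\varepsilon n^{1/p}/4$ gives the event inclusion
\[
\Bigl\{\max_{1\le k\le n}|S_k|\ge\varepsilon n^{1/p}\Bigr\}\subset A_n\cup B_n,
\]
where $A_n$ and $B_n$ are the events appearing in $\Rmnum{1}$ and $\Rmnum{2}$. Monotonicity and sub-additivity of $\vv$ then give $\vv\{\max_{1\le k\le n}|S_k|\ge\varepsilon n^{1/p}\}\le\vv(A_n)+\vv(B_n)$; multiplying by $n^{r-2}h(n)$ and summing over $n\ge n_0$ bounds the tail of the target series by $\Rmnum{1}+\Rmnum{2}$, while the finitely many remaining terms contribute at most $\sum_{n<n_0}n^{r-2}h(n)<\infty$ and may be absorbed. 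The only genuinely delicate step is the control of the deterministic drift $k\,\ee[Y_1']\sum_i a_i$; the reindexing by Fubini and the passage through $\vv$ are routine, and the two quantities $\Rmnum{1}$, $\Rmnum{2}$ themselves are estimated separately (via Lemmas \ref{lem02} and \ref{lem04}) in the proofs of Theorems \ref{thm3.1}--\ref{thm3.2}.
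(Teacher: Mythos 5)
Your proof is correct and follows essentially the same route as the paper: rewrite $S_k=\sum_i a_i\sum_{j=i+1}^{i+k}Y_j$, split $Y_j=Y_j''+(Y_j'-\ee Y_j')+\ee Y_j'$, use $\ee Y=\ee(-Y)=0$ with $|\ee Y_1'|\le\ee|Y_1''|$ and $C_{\vv}(|Y|^p)<\infty$ to show the drift is below $\varepsilon n^{1/p}/4$ for large $n$, and conclude by the event inclusion and (sub-additivity of) $\vv$. The only cosmetic differences are that the paper controls the tail term through the smooth truncation function $g$ rather than hard indicators, and you are slightly more explicit about absorbing the finitely many small-$n$ terms.
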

	\begin{proof} Observe that
		\[
		\sum_{k=1}^{n}X_k=\sum_{k=1}^{n}\sum_{i=-\infty}^{\infty}a_iY_{i+k}=\sum_{i=-\infty}^{\infty}a_i\sum_{j=i+1}^{i+n}Y_j.
		\]
		By $\sum_{i=-\infty}^{\infty}|a_i|<\infty$, $\ee(Y_j)=\ee(-Y_j)=0$, and Proposition 1.3.7 of \cite{peng2019nonlinear}, Lemma \ref{lem01}, we have
		$$
		\begin{aligned}
			&n^{-1/p}\left|\sum_{i=-\infty}^{\infty}a_i\sum_{j=i+1}^{i+n}\ee Y_j'\right|=n^{-1/p}\left|\sum_{i=-\infty}^{\infty}a_i\sum_{j=i+1}^{i+n}\ee\left[Y_j'-Y_j\right] \right|\\
			&\quad \le n^{-1/p}\sum_{i=-\infty}^{\infty}|a_i|\sum_{j=i+1}^{i+n}\ee |Y_j-Y_j'| \le C n^{-1/p}\ee|Y_1''| \le C n^{-1/p}\ee (n^{-1/p})^{p-1}|Y_1''|^p\\
			&\quad \le C n^{1-1/p}\ee|Y_1|^p\left(1-g\left(\frac{|Y_1|}{n^{1/p}}\right)\right)\le CC_{\vv}\left\{|Y_1|^p\left(1-g\left(\frac{|Y_1|}{n^{1/p}}\right)\right)\right\}\\
			&\quad \le CC_{\vv}\left\{|Y_1|^pI\{|Y_1|\ge \mu n^{1/p}\}\right\}\rightarrow 0, \mbox{   $n\rightarrow 0$.}
		\end{aligned}
		$$
		Therefore for $n$ sufficiently large, we see that
		\[
		n^{-1/p}\left|\sum_{i=-\infty}^{\infty}a_i\sum_{j=i+1}^{i+n}\ee Y_j'\right|<\varepsilon/4.
		\]
		Then
		$$
		\begin{aligned}
			&\sum_{n=1}^{\infty}n^{r-2}h(n)\vv\left\{\max_{1\le k\le n}\left|S_k\right|\ge \varepsilon n^{1/p}\right\}\\
			&\quad\le C\sum_{n=1}^{\infty}n^{r-2}h(n)\vv\left\{\max_{1\le k\le n}\left|\sum_{i=-\infty}^{\infty}a_i\sum_{j=i+1}^{i+k}Y_j''\right|\ge \varepsilon n^{1/p}/2\right\}\\
			&\quad \quad +\sum_{n=1}^{\infty}n^{r-2}h(n)\vv\left\{\max_{1\le k\le n}\left|\sum_{i=-\infty}^{\infty}a_i\sum_{j=i+1}^{i+k}(Y_j'-\ee[Y_j'])\right|\ge \varepsilon n^{1/p}/4\right\}\\
			&\quad=:\Rmnum{1}+\Rmnum{2}.
		\end{aligned}
		$$

	\end{proof}
	
	\begin{proof}[\emph{Proof of Theorem \ref{thm3.1}}] According to Lemma \ref{lem03}, it is enough to establish that $(\rm{\rmnum{1}})$ holds. By Lemma \ref{lem05}, we only need to establish that $\Rmnum{1}<\infty$ and $\Rmnum{2}<\infty$.
		
		For $\Rmnum{1}$, by Markov inequality under sub-linear expectations, Lemma \ref{lem01}, and similar proof of Lemma 2.2 in \cite{zhong2017complete}, we obtain
		$$
		\begin{aligned}
			\Rmnum{1}&\le C\sum_{n=1}^{\infty}n^{r-2}h(n)n^{-1/p}\ee\max_{1\le k\le n}\left|\sum_{i=-\infty}^{\infty}a_i\sum_{j=i+1}^{i+k}Y_j''\right|\\
			&\le C\sum_{n=1}^{\infty}n^{r-1-1/p}h(n)\ee|Y_1''|\\
			&\le C\sum_{n=1}^{\infty}n^{r-1-1/p}h(n)C_{\vv}\left\{|Y_1''|\right\}\\
			&\le C\sum_{n=1}^{\infty}n^{r-1-1/p}h(n)\int_{0}^{\infty}\vv\left\{|Y_1''|>x\right\}\dif x\\
			&\le C\sum_{n=1}^{\infty}n^{r-1-1/p}h(n)\left[\vv\left\{|Y|>n^{1/p}\right\}n^{1/p}+\int_{n^{1/p}}^{\infty}\vv\left\{|Y|>x\right\}\dif x\right]\\
			&\le C\int_{1}^{\infty}x^{r-1}h(x)\vv\left\{|Y|>x^{1/p}\right\}\dif x+C\int_{1}^{\infty}y^{r-1-1/p}h(y)\int_{y^{1/p}}^{\infty}\vv\left\{|Y|>x\right\}\dif x\dif y\\
			&\le C\int_{1}^{\infty}\vv\left\{|Y|^{pr}h(|Y|^p)>x^rh(x)\right\}\dif (x^rh(x))+C\int_{1}^{\infty}\vv\left\{|Y|>x\right\}\dif x\int_{1}^{x^p}y^{r-1-1/p}h(y)\dif y\\
			&\le CC_{\vv}\left\{|Y|^{pr}h(|Y|^p)\right\}+C\int_{1}^{\infty}\vv\left\{|Y|>x\right\}x^{rp-1}h(x^p)\dif x\\
			&\le CC_{\vv}\left\{|Y|^{pr}h(|Y|^p)\right\}<\infty.
		\end{aligned}
		$$
		For $\Rmnum{2}$, by Markov inequality under sub-linear expectations, H\"{o}lder inequality, Lemma \ref{lem02}, we see that for any $q>2$,
		$$
		\begin{aligned}
			\Rmnum{2}&\le C\sum_{n=1}^{\infty}n^{r-2}h(n)n^{-q/p}\ee\max_{1\le k\le n}\left|\sum_{i=-\infty}^{\infty}a_i\sum_{j=i+1}^{i+k}(Y_j'-\ee[Y_j'])\right|^q\\
			&\le  C\sum_{n=1}^{\infty}n^{r-2}h(n)n^{-q/p}\ee\left[\sum_{i=-\infty}^{\infty}|a_i|^{1-1/q}\left(|a_i|^{1/q}\max_{1\le k\le n}\left|\sum_{j=i+1}^{i+k}(Y_j'-\ee[Y_j'])\right|\right)\right]^q\\
			&\le C\sum_{n=1}^{\infty}n^{r-2-q/p}h(n)\left(\sum_{i=-\infty}^{\infty}|a_i|\right)^{q-1}\sum_{i=-\infty}^{\infty}|a_i|\ee\max_{1\le k\le n}\left|\sum_{j=i+1}^{i+k}(Y_j'-\ee[Y_j'])\right|^q\\
			&\le C\sum_{n=1}^{\infty}n^{r-2-q/p}h(n)\left[n\left(\left|\ee[-Y_1']\right|+\left|\ee[Y_1']\right|\right)\right]^q\\
			&\quad + C\sum_{n=1}^{\infty}n^{r-2-q/p}h(n)\left(n\ee|Y_1'|^2\right)^{q/2}\\
			&\quad +C\sum_{n=1}^{\infty}n^{r-1-q/p}h(n)\ee|Y_1'|^q\\
			&=:\Rmnum{2}_1+\Rmnum{2}_2+\Rmnum{2}_3.
		\end{aligned}
		$$
		To establish $\Rmnum{2}_1<\infty$, in view of $\ee(-Y)=\ee(Y)=0$, and $|\ee(X)-\ee(Y)|\le \ee|X-Y|$, by Lemma \ref{lem01}, we see that
		$$
		\begin{aligned}
			&\left[n\left(\left|\ee[-Y_1']\right|+\left|\ee[Y_1']\right|\right)\right]^q\\
			&\quad \le \left[n\left(\left|\ee[-Y_1']-\ee[-Y_1]\right|+\left|\ee[Y_1']-\ee[Y_1]\right|\right)\right]^q\\
			&\quad \le C n^{q}\left(\ee|Y_1''|\right)^q\le Cn^q\left(\ee|Y_1|\left(1-g\left(\frac{|Y_1|}{n^{1/p}}\right)\right)\right)^q\\
			&\quad \le Cn^q \left(C_{\vv}\left\{|Y_1|\left(1-g\left(\frac{|Y_1|}{n^{1/p}}\right)\right)\right\}\right)^q\\
			&\quad \le Cn^q \left(C_{\vv}\left\{|Y_1|I\{|Y_1|>\mu n^{1/p}\}\right\}\right)^q\\
			&\quad \le Cn^q \left(\int_{0}^{\infty}\vv\left\{|Y^{rp}|h(|Y|^p)I\{|Y|>\mu n^{1/p}\}>x\mu n^{(rp-1)/p}h(n)\right\}\dif x\right)^q\\
			&\quad \le Cn^q\left(\frac{C_{\vv}\{|Y|^{rp}h(|Y|^p)\}}{n^{(rp-1)/p}h(n)}\right)^q \ll Cn^{q-qr+q/p}/h(n)^q.
		\end{aligned}
		$$
		Hence
		$$
		\begin{aligned}
			\Rmnum{2}_1&\le C\sum_{n=1}^{\infty}n^{r-2-q/p}h(n)n^{q-qr+q/p}/h(n)^q\\
			&\le C\sum_{n=1}^{\infty}n^{r-2-q(r-1)}h(n)^{1-q}<\infty.
		\end{aligned}
		$$
		To prove $\Rmnum{2}_2<\infty$, we study two cases. If $rp<2$, take $q>2$, observe that in this case $r-2+q/2-rq/2<-1$. By Lemma \ref{lem01}, we see that
		$$
		\begin{aligned}
			\Rmnum{2}_2&=C\sum_{n=1}^{\infty}n^{r-2-q/p}h(n)n^{q/2}\left(\ee|Y_1'|^2\right)^{q/2}\\
			&\le C\sum_{n=1}^{\infty}n^{r-2-q/p+q/2}h(n)\left(\ee|Y_1'|^{rp}|Y_1'|^{2-rp}\right)^{q/2}\\
			&\le C\sum_{n=1}^{\infty}n^{r-2-q/p+q/2}h(n) \left(C_{\vv}\left(|Y|^{rp}\right)\right)^{q/2}n^{\frac{2-rp}{p}\frac{q}{2}}\\
			&\le  C\sum_{n=1}^{\infty}n^{r-2+q/2-rq/2}h(n)<\infty.
		\end{aligned}
		$$
		If $rp\ge 2$, take $q>pr$. Note in this case $\ee|Y|^2<C_{\vv}(|Y|^2)<\infty$. We see that
		$$
		\begin{aligned}
			\Rmnum{2}_2&=C\sum_{n=1}^{\infty}n^{r-1-q/p}h(n)\left(\ee|Y_1'|^2\right)^{q/2}\\
			&\le C\sum_{n=1}^{\infty}n^{r-1-q/p}h(n)<\infty.
		\end{aligned}
		$$
		By Lemma \ref{lem04}, we conclude that $\Rmnum{2}_3<\infty$. The proof of Theorem \ref{thm3.1} is complete.
	\end{proof}
	
	\begin{proof}[\emph{ Proof of Theorem \ref{thm3.2}}]
		By Lemma \ref{lem05}, we only need to establish that $\Rmnum{1}<\infty$ and $\Rmnum{2}<\infty$ with $r=1$. For $\Rmnum{1}$, by Markov inequality under sub-linear expectations, $C_r$ inequality, Lemma \ref{lem01}, and the proof of Lemma 2.2 of \cite{zhong2017complete} ( observe that $\theta<1$),  we see that
		$$
		\begin{aligned}
			\Rmnum{1}&\le \sum_{n=1}^{\infty}n^{-1}h(n)n^{-\theta/p}\ee\max_{1\le k\le n}\left|\sum_{i=-\infty}^{\infty}a_i\sum_{j=i+1}^{i+k}Y_j''\right|^{\theta}\\
			&\le C\sum_{n=1}^{\infty}h(n)n^{-\theta/p}\ee|Y_1''|^{\theta} \\
			&\le C\sum_{n=1}^{\infty}h(n)n^{-\theta/p}C_{\vv}\left(|Y_1''|^{\theta}\right)\\
			&\le C\sum_{n=1}^{\infty}h(n)n^{-\theta/p}C_{\vv}\left\{|Y|^{\theta}I\{|Y|> n^{1/p}\}\right\}\\
			&\le C\sum_{n=1}^{\infty}n^{-\theta/p}h(n)\int_{0}^{\infty}\vv\left\{|Y|^{\theta}I\{|Y|> n^{1/p}\}>x\right\}\dif x\\
			&\le C\int_{1}^{\infty}y^{-\theta/p}h(y)\int_{0}^{\infty}\vv\left\{|Y|^{\theta}I\{|Y|> y^{1/p}\}>x\right\}\dif x\dif y\\
			&\le C\int_{1}^{\infty}y^{-\theta/p}h(y)\left[\int_{0}^{y^{\theta/p}}+\int_{y^{\theta/p}}^{\infty}\right]\vv\left\{|Y|^{\theta}I\{|Y|>y^{1/p}\}>x\right\}\dif x\dif y
		\end{aligned}
		$$
		$$
		\begin{aligned}
			&\le C\int_{1}^{\infty}\vv\left\{|Y|>y^{1/p}\right\}h(y)\dif y\\
			&\quad +C\int_{1}^{\infty}\vv\left\{|Y|^{\theta}>x\right\}\int_{1}^{x^{p/\theta}}y^{-\theta/p}h(y)\dif y\dif x\\
			&\le CC_{\vv}\left(|Y|^ph(|Y|^p)\right)+C\int_{1}^{\infty}\vv\left\{|Y|^{\theta}>x\right\}x^{p/\theta-1}h(x^{p/\theta})\dif x\\
			&\le CC_{\vv}\left(|Y|^ph(|Y|^p)\right)<\infty.
		\end{aligned}
		$$
		For $\Rmnum{2}$, by Markov inequality under sub-linear expectations, H\"{o}lder inequality, and Lemma \ref{lem01}, we see that
		$$
		\begin{aligned}
			\Rmnum{2}&\le C\sum_{n=1}^{\infty}n^{-1}h(n)n^{-2/p}\ee\max_{1\le k\le n}\left|\sum_{i=-\infty}^{\infty}a_i\sum_{j=i+1}^{i+k}(Y_j'-\ee[Y_j'])\right|^2\\
			&\le C\sum_{n=1}^{\infty}n^{-1}h(n)n^{-2/p}\ee \left(\sum_{i=-\infty}^{\infty}|a_i|^{1/2}\left(|a_i|^{1/2}\max_{1\le k\le n}\left|\sum_{j=i+1}^{i+k}(Y_j'-\ee[Y_j'])\right|\right)\right)^2\\
			&\le  C\sum_{n=1}^{\infty}n^{-1-2/p}h(n)\sum_{i=-\infty}^{\infty}|a_i|\sum_{i=-\infty}^{\infty}|a_i|\ee\max_{1\le k\le n}\left|\sum_{j=i+1}^{i+k}(Y_j'-\ee[Y_j'])\right|^2\\
			&\le C\sum_{n=1}^{\infty}n^{-1-2/p}h(n)\left[n\ee[|Y_1'|^2]+\left(\sum_{j=1}^{n}|\ee(-Y_j')|+|\ee(Y_j')|\right)^2\right]\\
			&=C\sum_{n=1}^{\infty}n^{-2/p}h(n)\ee[|Y_1'|^2] + C\sum_{n=1}^{\infty}n^{-1-2/p}h(n)\left(\sum_{j=1}^{n}|\ee(-Y_j')|+|\ee(Y_j')|\right)^2\\
			&=:\Rmnum{2}_1+\Rmnum{2}_2.
		\end{aligned}
		$$
		By Lemma \ref{lem04}, we conclude that $\Rmnum{2}_1<\infty$. By $\ee(Y_j)=\ee(-Y_j)=0$, $|\ee(X)-\ee(Y)|\le \ee|X-Y|$, $C_r$ inequality, and Lemma \ref{lem01}, we obtain
		$$
		\begin{aligned}
			\Rmnum{2}_2&\le C\sum_{n=1}^{\infty}n^{-1-2/p}h(n)\left[\sum_{j=1}^{n}\left|\ee(-Y_j')\right|+\left|\ee(Y_j')\right|\right]^2\\
			&\le C\sum_{n=1}^{\infty}n^{-1-2/p}h(n)\left[\sum_{j=1}^{n}\ee[|Y_j-Y_j'|]\right]^2\\
			&\le C\sum_{n=1}^{\infty}n^{-1-2/p}h(n)\left[\sum_{i=1}^{n}C_{\vv}\left\{|Y_i''|\right\}\right]^2\\
			&\le C\sum_{n=1}^{\infty}n^{1-2/p}h(n)\left[\int_{0}^{n^{1/p}}\vv\left\{|Y_1|>n^{1/p}\right\}\dif y+\int_{n^{1/p}}^{\infty}\vv\left\{|Y_1|>y\right\}\dif y\right]^2\\
			&\le C\sum_{n=1}^{\infty}n^{1}h(n)\left[\vv\left\{|Y|>n^{1/p}\right\}\right]^2+C\sum_{n=1}^{\infty}n^{1-2/p}h(n)\left[\int_{n^{1/p}}^{\infty}\vv\left\{|Y|>y\right\}\dif y\right]^2
		\end{aligned}
		$$
		$$
		\begin{aligned}
			&\le C\int_{1}^{\infty}xh(x)\vv^2\left\{|Y|>x^{1/p}\right\}\dif x\\
			&\quad +C\int_{1}^{\infty}x^{1-2/p}h(x)\dif x\int_{x^{1/p}}^{\infty}\vv\left\{|Y|>y\right\}\dif y\int_{x^{1/p}}^{y}\vv\left\{|Y|>z\right\}\dif z\\
			&\le C\int_{1}^{\infty}\left(x\vv\left\{|Y|^ph(|Y|^p)>xh(x)\right\}\right)\vv\left\{|Y|^ph(|Y|^p)>xh(x)\right\} \dif\left[xh(x)\right]\\
			&\quad +C\int_{1}^{\infty}\vv\left\{|Y|>y\right\}\dif y\int_{1}^{y}\vv\left\{|Y|>z\right\}\dif z\int_{1}^{z^p}x^{1-2/p}h(x)\dif x\\
			&\le C\int_{1}^{\infty}\vv\left\{|Y|^ph(|Y|^p)>xh(x)\right\} \dif\left[xh(x)\right]\\
			&\quad +C\int_{1}^{\infty}\vv\left\{|Y|>y\right\}\dif y\int_{1}^{y}\vv\left\{|Y|>z\right\}z^{2p-2}h(z^p)\dif z\\
			&\le CC_{\vv}\left(|Y|^ph(|Y|^p)\right)+C\int_{1}^{\infty}\vv\left\{|Y|>y\right\}\dif y\int_{1}^{y}\frac{\ee|Y|^p}{z^p}z^{2p-2}h(z^p)\dif z\\
			&\le CC_{\vv}\left(|Y|^ph(|Y|^p)\right)+C\int_{1}^{\infty}\vv\left\{|Y|>y\right\}C_{\vv}(|Y|^p)y^{p-1}h(y^p)\dif y\\
			&\le CC_{\vv}\left(|Y|^ph(|Y|^p)\right)+C\int_{1}^{\infty}\vv\left\{|Y|^ph(|Y|^p)>y^ph(y^p)\right\}\dif [y^ph(y^p)]\\
			&\le CC_{\vv}\left(|Y|^ph(|Y|^p)\right)<\infty.
		\end{aligned}
		$$
		Now we will establish almost sure convergence under $\vv$. By $\ee(Y_1)=\ee(-Y_1)=0$ and $C_{\vv}\left(|Y|^p\right)<\infty$, we see that
		$$
		\sum_{n=1}^{\infty}n^{-1}\vv\left\{\max_{1\le k\le n}|S_k|>\varepsilon n^{1/p}\right\}<\infty, \mbox{   for  all $\varepsilon>0$.}
		$$
		Therefore,
		$$
		\begin{aligned}
			\infty&>\sum_{n=1}^{\infty}n^{-1}\vv\left\{\max_{1\le k\le n}|S_k|>\varepsilon n^{1/p}\right\}\\
			&=\sum_{k=1}^{\infty}\sum_{n=2^{k-1}}^{2^k-1}n^{-1}\vv\left\{\max_{1\le k\le n}|S_k|>\varepsilon n^{1/p}\right\}\\
			&\ge \frac12\vv\left\{\max_{1\le m\le 2^{k-1}}|S_m|>\varepsilon 2^{k/p}\right\}.
		\end{aligned}
		$$
		By Borel-Cantell lemma under sub-linear expectations (cf. \cite{chen2013strong} or Lemma  1 of \cite{zhang2018marcinkiewicz}), we see that
		$$
		2^{-k/p}\max_{1\le m\le 2^k}|S_m|\rightarrow 0,  \mbox{    a. s.   $\vv$,}
		$$
		which results in $S_n/n^{1/p}\rightarrow 0$, a. s. $\vv$.
	\end{proof}
	
	\bibliographystyle{elsarticle-harv}
	\bibliography{References01}
	
\end{document}